\newtheorem{theorem}{Theorem}[section]
\newtheorem{lemma}[theorem]{Lemma}
\newtheorem{corollary}[theorem]{Corollary}
\numberwithin{equation}{section}
\begin{document}

\newcommand{\cc}{\mathfrak{c}}
\newcommand{\N}{\mathbb{N}}
\newcommand{\Q}{\mathbb{Q}}
\newcommand{\R}{\mathbb{R}}

\newcommand{\PP}{\mathbb{P}}
\newcommand{\forces}{\Vdash}
\newcommand{\dom}{\text{dom}}
\newcommand{\osc}{\text{osc}}

\title[$\ell_\infty$-sums]
{$\ell_\infty$-sums and the Banach space $\ell_\infty/c_0$}

\author{Christina Brech}
\thanks{The first author was partially supported by FAPESP (2010/12639-1) and Pr\'o-reitoria de Pesquisa USP (10.1.24497.1.2).} 
\address{Departamento de Matem\'atica, Instituto de Matem\'atica e Estat\'\i stica, Universidade de S\~ao Paulo,
Caixa Postal 66281, 05314-970, S\~ao Paulo, Brazil}
\email{christina.brech@gmail.com}

\author{Piotr Koszmider}
\thanks{The second author was partially supported by the National Science Center research grant 2011/01/B/ST1/00657. } 
\address{Institute of Mathematics, Polish Academy of Sciences,
ul. \'Sniadeckich 8,  00-956 Warszawa, Poland}

\email{\texttt{piotr.koszmider@impan.pl}}

\subjclass{}
\date{}
\keywords{}

\begin{abstract} 
This paper is concerned with the isomorphic structure of the Banach space
$\ell_\infty/c_0$ and how it depends on combinatorial tools whose existence is
consistent but not provable from the usual axioms of ZFC.
Our main global result is that it is consistent that
 $\ell_\infty/c_0$ does not have an orthogonal \break $\ell_\infty$-decomposition
 that is, it is not of the form
$\ell_\infty(X)$ for any Banach
space $X$.
The main local result is that it is consistent that $\ell_\infty(c_0(\mathfrak{c}))$ does not embed isomorphically 
into $\ell_\infty/c_0$, where $\mathfrak{c}$ is the cardinality of the continuum,
while $\ell_\infty$ and $c_0(\mathfrak{c})$ always do embed quite canonically.
This should be compared  with  the 
results of Drewnowski and Roberts that under the assumption of the continuum hypothesis
$\ell_\infty/c_0$ is isomorphic to its $\ell_\infty$-sum and in particular it contains an isomorphic copy of all Banach spaces
of the form $\ell_\infty(X)$ for any subspace $X$ of $\ell_\infty/c_0$.

\end{abstract}

\maketitle

\section{introduction}

Drewnowski and Roberts proved in \cite{drewnowski} that, assuming the Continuum Hypothesis (abbreviated CH), the Banach space $\ell_\infty/c_0$ is isomorphic to its $\ell_\infty$-sum denoted  $\ell_\infty(\ell_\infty/c_0)$. They concluded that
under the assumption of CH the Banach space $\ell_\infty/c_0$ is primary, that is, given a decompositon $\ell_\infty/c_0 = 
A\oplus B$, one of the spaces $A$ or $B$ must be isomorphic to $\ell_\infty/c_0$. The proof relies
on the Pe\l czy\'nski decomposition method and on another striking
result from \cite{drewnowski} (not requiring CH) which says that one of the factors $A$ or $B$ as above must contain a complemented subspace isomorphic to
$\ell_\infty/c_0$.  Another conclusion was that  $\ell_\infty(\ell_\infty/c_0)/c_0(\ell_\infty/c_0)$ is  isomorphic to $\ell_\infty/c_0$ under the assumption of CH.

In this paper we show that some of the above statements cannot be proved without some additional 
set theoretic assumptions. Namely, for any cardinal $\kappa \geq \omega_2$, the following statements all hold in the Cohen model obtained by
adding $\kappa$-many Cohen reals to a model of CH ($\cc$ denotes the cardinality of the continuum):
\begin{enumerate}[(a)]
\item $\ell_\infty(c_0(\omega_2))$ does not embed isomorphically  into $\ell_\infty/c_0$,
\item $\ell_\infty(c_0(\cc))$ does not embed isomorphically  into $\ell_\infty/c_0$,
\item $\ell_\infty(\ell_\infty/c_0)$ does not embed isomorphically  into $\ell_\infty/c_0$,
\item $\ell_\infty/c_0$ is not isomorphic to  $\ell_\infty(X)$ for any Banach space $X$,
\item $\ell_\infty(\ell_\infty/c_0)/c_0(\ell_\infty/c_0)$ is not isomorphic to $\ell_\infty/c_0$.
\end{enumerate}

Below we show that (a) easily implies the other statements and so later we will
focus on proving (a). Indeed, (a) implies (b) simply because $\mathfrak{c} \geq \omega_2$ in those models. (c) follows from (b) and the fact that $\ell_\infty/c_0$ contains
an isometric copy of $c_0(\cc)$
(e.g., the closure of the space spanned by the classes of characteristic functions of elements
of a family $\{A_\xi: \xi<\cc\}$ of infinite subsets of $\mathbb{N}$ whose pairwise intersections are finite). 
To conclude (d) from (a), use a result of Rosenthal (see Theorem 7.11 of \cite{biortho}) that if $T:c_0(\Gamma)\rightarrow X$ is a bounded linear operator such that
$|\{\gamma \in \Gamma: |T(\chi_{\{\gamma\}})|>\varepsilon\}| = |\Gamma|$ for some $\varepsilon >0$,
then there is $\Gamma'\subseteq \Gamma$ such that $|\Gamma'|=|\Gamma|$ and $T$ restricted
to $c_0(\Gamma')$ is an isomorphism onto its image; hence
 if $\ell_\infty(X)$ contains $c_0(\omega_2)$, then so does $X$.
Finally (e) follows from (c) alone, because $\ell_\infty(\ell_\infty/c_0)$ embeds isometrically into $\ell_\infty(\ell_\infty/c_0)/c_0(\ell_\infty/c_0)$.
Indeed, consider a partition of $\N$ into pairwise disjoint infinite sets $(A_i: i\in \N)$
and for each $x\in \ell_\infty(\ell_\infty/c_0)$ consider $x'\in \ell_\infty(\ell_\infty/c_0)$ 
such that $x'(n)=x(i)$ if and only if
$n\in A_i$. Note that $T:\ell_\infty(\ell_\infty/c_0)\rightarrow \ell_\infty(\ell_\infty/c_0)$ 
given by $T(x)=x'$ is an isometric embedding. Moreover it gives an isometric embedding
while composed with the quotient map from $\ell_\infty(\ell_\infty/c_0)$ onto
$\ell_\infty(\ell_\infty/c_0)/c_0(\ell_\infty/c_0)$.

We emphasize an interesting phenomenon that follows from the gap which may exist between the number of added Cohen reals and $\omega_2$:
 even when $\mathfrak{c}$ is very large, meaning that $\ell_\infty/c_0$ has large density, still it may not contain an isomorphic copy of  $\ell_\infty(c_0(\omega_2))$ while it always contain quite canonical copies of
both $\ell_\infty$ and
$c_0(\omega_2)$. 

It remains unknown if $\ell_\infty/c_0$ is primary in the above models and in general
if the primariness of $\ell_\infty/c_0$ can be proved without additional set theoretic assumptions.
It would also be interesing to conclude the above statements in a more axiomatic way as in
\cite{stevo} or \cite{krupski}.

Another problem mentioned in \cite{drewnowski} remains open as well (including in the Cohen model), namely if
$\ell_\infty/c_0$ has the Schroeder-Bernstein property, that is if there exists a complemented
subspace $X$ of $\ell_\infty/c_0$, nonisomorphic to $\ell_\infty/c_0$ but
which contains a complemented isomorphic
copy of $\ell_\infty/c_0$. The Pe\l czy\'nski decomposition method and the existence of an isomorphism between
$\ell_\infty/c_0$ and $\ell_\infty(\ell_\infty/c_0)$ implies that $\ell_\infty/c_0$ 
has the Schroeder-Bernstein property assuming CH. On the other hand, the nonprimariness of 
$\ell_\infty/c_0$ would imply that it does not have the Schroeder-Bernstein property
as observed in \cite{drewnowski}. It could be noted that after the
first example of a Banach space without the Schroeder-Bernstein property was given in \cite{gowers},
an example of the form $C(K)$, like all the spaces considered in this paper, was constructed as well (see \cite{sb}).

Our results (a) - (c) can also be seen in a different light. It is well-known that 
assuming CH the space $\ell_\infty/c_0$ is isometrically universal for all Banach spaces of 
density not bigger than $\cc$. It has been proved by the authors in \cite{universal} 
that this is not the case in the Cohen model, even in the isomorphic sense. 
The results (a) - (c) show that $\ell_\infty(c_0(\cc))$ or 
 $\ell_\infty(\ell_\infty/c_0)$ can be added to a recently growing list of Banach spaces that consistently
do not embed into $\ell_\infty/c_0$, see \cite{ug}, \cite{krupski}, \cite{stevo} or section 3 of \cite{universal}.
A new feature of the examples provided in this paper is that they are neither
obtained from a well-ordering of the continuum nor a generically constructed object like those in the above mentioned
papers.

In Section 2 we present some consequences of the assumption that $\ell_\infty/c_0$ contains an isomorphic copy of $\ell_\infty(c_0(\lambda))$ for some uncountable cardinal $\lambda$ and Section 3 contains the key forcing lemma (Lemma \ref{maindisjointlemma}), whose proof is inspired by the proof of A. Dow of Theorem 4.5 of \cite{alan}
that the boundary of a zero set in $\N^*$ is not a retract of $\N^*$ in the Cohen model.

The undefined notation of the paper is fairly standard. Undefined notions related
to set theory and independence proofs can be found in \cite{kunen} and those
related to Banach spaces in \cite{fabian}.

Let us now introduce some particular notation concerning the spaces we consider here.
Given $A \subseteq \N$, let us denote by $[A]$ the corresponding equivalence class in $\wp(\N)/Fin$, by $A^*$ the corresponding clopen set of $\beta \N$ and by $[A]^*$ the clopen set of $\N^*=\beta\N\setminus \N$ corresponding to $[A]$.

Given $x \in \ell_\infty$, let us denote by $[x]$ the corresponding equivalence class in $\ell_\infty/c_0$. We will use the isometries $\ell_\infty \equiv C(\beta \N)$ and $\ell_\infty/c_0 \equiv C(\N^ *)$ and identify each bounded sequence with its extension to $\beta \N$ and each class $y = [x]$ of bounded sequences in $\ell_\infty/c_0$ with the restriction to $\N^ *$ of an extension of $x$ to $\beta \N$. 

For $m,n \in \N$, $\alpha, \beta \in \lambda$ and $\sigma \in \lambda^A$ for $A\subseteq \N$, let 
$$1_{n,\alpha}(m)(\beta) = \left\{\begin{array}{ll}
1 & \text{ if }(n,\alpha)=(m,\beta)\\
0 & \text{ otherwise,}
\end{array}\right.$$
$$1_\sigma(m)(\beta) = \left\{\begin{array}{ll}
1 & \text{ if }(m,\beta) \in \sigma\\
0 & \text{ otherwise,}
\end{array}\right.$$
and notice that $1_{n,\alpha}, 1_\sigma \in \ell_\infty(c_0(\lambda))$ and they
can be thought of as the characteristic functions of
$\{(n,\alpha)\}$ and of the graph of $\sigma$ respectively.

Some of the problems addressed in this paper were considered in \cite{grzech}
under different set-theoretic assumptions. 
Unfortunately the forthcomming paper announced there  which was to contain
the proofs of the statements instead of sketches of the proofs
has not appeared as far as now. Also the statements and arguments outlined in \cite{grzech} on page 303
concerning the Cohen model contradict our results.

\section{Facts on isomorphic embeddings of $l_\infty(c_0(\lambda))$ into $\ell_\infty/c_0$}

\begin{lemma}\label{locallyconstant}
Suppose $y\in \ell_\infty/c_0 \setminus \{0\}$ and $A\subseteq \N$ is infinite. Then there is an infinite $B\subseteq A$ and $r \in \mathbb{R}$ such that $|r| \geq \frac{\Vert y|[A]^* \Vert}{2}$ and $y|[B]^* \equiv r$.
\end{lemma}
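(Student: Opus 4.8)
The plan is to pass from the topological picture on $\N^*$ to the elementary behaviour of a representative sequence, where the statement becomes a one-line application of the Bolzano--Weierstrass theorem. First I would fix $x\in\ell_\infty$ with $y=[x]$ and record the standard dictionary between $\ell_\infty/c_0\equiv C(\N^*)$ and bounded sequences: for $p\in\N^*$ one has $y(p)=\lim_{n\to p}x(n)$, the value at $p$ of the continuous extension of $x$ to $\beta\N$ (which is independent of the chosen representative, since two representatives differ by an element of $c_0$); for every infinite $C\subseteq\N$ one has $\Vert y|[C]^*\Vert=\limsup_{n\in C}|x(n)|$; and, crucially, if $x(n)\to r$ as $n\to\infty$ along an infinite $C$, then $\lim_{n\to p}x(n)=r$ for every $p\in[C]^*$, so that $y|[C]^*\equiv r$. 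These three observations carry the whole translation.

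Set $M=\Vert y|[A]^*\Vert$. If $M=0$ then $y|[A]^*\equiv 0$ and one takes $B=A$, $r=0$, so assume $M>0$. Since $M=\limsup_{n\in A}|x(n)|$, the set $\{n\in A:|x(n)|>M/2\}$ is infinite; splitting it according to the sign of $x(n)$, one of the two halves is infinite, and after possibly replacing $y$ by $-y$ (which changes neither the hypothesis nor the conclusion) I may assume there is an infinite $A_1\subseteq A$ with $x(n)>M/2$ for all $n\in A_1$. The sequence $(x(n))_{n\in A_1}$ is bounded by $\Vert x\Vert_\infty$ and bounded below by $M/2$, so Bolzano--Weierstrass yields an infinite $B\subseteq A_1$ along which $x(n)$ converges to some $r\in[M/2,\Vert x\Vert_\infty]$; in particular $|r|=r\geq M/2$.

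It then remains only to invoke the last fact of the dictionary: since $x(n)\to r$ along $B$, we get $y|[B]^*\equiv r$, and $B\subseteq A_1\subseteq A$ is infinite, which is exactly what is claimed.

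There is essentially no hard step here; the only points requiring care are the two identifications in the first paragraph, namely that the restricted norm is computed as a $\limsup$ of the representative and that convergence of the representative along $B$ is equivalent to $y$ being constant on the clopen set $[B]^*$. I would also note that the constant $1/2$ is not optimal: rather than thresholding at $M/2$, one can observe that $M$ is attained as $|c|$ for some cluster value $c$ of $(x(n))_{n\in A}$ and extract $B$ with $x(n)\to c$, yielding the stronger conclusion $|r|=M$. The weaker bound stated in the lemma is all that is used later and follows from the cleaner pigeonhole argument above.
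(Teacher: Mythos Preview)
Your argument is correct and follows essentially the same route as the paper: pick a representative $x$, observe that $\{n\in A:|x_n|>M/2\}$ is infinite, and use Bolzano--Weierstrass to extract a convergent subsequence. Your treatment is slightly more careful (you handle the trivial case $M=0$ explicitly), while the sign-splitting step is unnecessary since convergence of $(x_n)_{n\in B}$ to $r$ with $|x_n|>M/2$ on $B$ already forces $|r|\ge M/2$.
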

\begin{proof} 
Let $x = (x_n)_{n \in \N} \in \ell_\infty$ be such that $y=[x]$. Since $B'=\{n \in A: |x_n|> \frac{\Vert y|[A]^* \Vert}{2}\}$ is infinite and $\{x_n: n \in B'\}$ is bounded, there is an infinite $B \subseteq B'$ such that $(x_n)_{n \in B}$ converges to some $r \in \R \setminus\{0\}$. Notice that $|r| \geq \frac{\Vert y|[A]^* \Vert}{2}$ and $y|[B]^* \equiv r$.
\end{proof}

\begin{theorem}\label{bigrosenthal} 
Assume $\lambda$ is an uncountable cardinal and $T:\ell_\infty(c_0(\lambda))\rightarrow \ell_\infty/c_0$ is an isomorphic embedding. Then there is $X \in[\lambda]^{\lambda}$ and for each 
$(n, \alpha) \in \N \times X$ there is an infinite set $E_{n, \alpha}\subseteq \N$ and $r_{n, \alpha}\in\R$
such that 
$$|r_{n, \alpha}| \geq \frac{\Vert T(1_{n,\alpha})\Vert}{2}$$
and if $\sigma \in \lambda^\N$ is an injective function  such that $Im(\sigma) \subseteq X$, then for
 all $(n, \alpha) \in \N \times X$,
$$T(1_\sigma)|[E_{n,\alpha}]^* \equiv \left\{\begin{array}{ll}
r_{n,\alpha} & \text{ if }(n, \alpha) \in \sigma\\
0  & \text{ if }\alpha \notin Im(\sigma). \end{array}\right.$$
\end{theorem}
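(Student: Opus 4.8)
The plan is to fix isomorphism constants $M\ge 1$ with $M^{-1}\|v\|\le\|T(v)\|\le M\|v\|$ for all $v\in\ell_\infty(c_0(\lambda))$, to apply Lemma \ref{locallyconstant} to each $T(1_{n,\alpha})$ (with $A=\N$) to obtain a first infinite set $E^0_{n,\alpha}$ and a real $r_{n,\alpha}$ with $|r_{n,\alpha}|\ge\|T(1_{n,\alpha})\|/2\ge 1/(2M)$ and $T(1_{n,\alpha})|[E^0_{n,\alpha}]^*\equiv r_{n,\alpha}$, and then to locate $X$ and shrink the $E^0_{n,\alpha}$ to the desired $E_{n,\alpha}$. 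The first useful observation, which I would isolate as a lemma, is a bounded overlap estimate: whenever $(n_1,\alpha_1),\dots,(n_k,\alpha_k)$ are distinct, the disjointness of their supports in $\N\times\lambda$ gives $\|\sum_i\varepsilon_i 1_{n_i,\alpha_i}\|=1$ for any signs $\varepsilon_i$, hence $\|\sum_i\varepsilon_i T(1_{n_i,\alpha_i})\|\le M$; evaluating at a point $p\in\N^*$ with $\varepsilon_i=\mathrm{sign}\,T(1_{n_i,\alpha_i})(p)$ yields $\sum_i|T(1_{n_i,\alpha_i})(p)|\le M$, so at every $p$ at most $4M^2$ of the pairs satisfy $|T(1_{n,\alpha})(p)|>1/(4M)$. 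Consequently, for any $\sigma\in\lambda^\N$ and any $p$, only boundedly many coordinates $m$ are \emph{relevant at $p$} in the sense that $|T(1_{m,\sigma(m)})(p)|>1/(4M)$.

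Next I would reduce the whole statement to its vanishing half. Using injectivity, if $\sigma(n)=\alpha$ then $1_\sigma=1_{n,\alpha}+1_{\sigma'}$ with $\sigma'=\sigma|(\N\setminus\{n\})$, the two summands being disjointly supported, so $T(1_\sigma)=T(1_{n,\alpha})+T(1_{\sigma'})$; since $\sigma$ is injective, $\alpha\notin Im(\sigma')$, so the case $(n,\alpha)\in\sigma$ follows from $T(1_{n,\alpha})|[E_{n,\alpha}]^*\equiv r_{n,\alpha}$ together with the vanishing assertion $T(1_{\sigma'})|[E_{n,\alpha}]^*\equiv 0$ applied to $\sigma'$ (it is harmless to phrase the vanishing half for injective functions defined on a cofinite subset of $\N$). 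Thus it suffices to find $X$ and the $E_{n,\alpha}$ so that $T(1_\tau)|[E_{n,\alpha}]^*\equiv 0$ for every injective $\tau$ with $Im(\tau)\subseteq X$ and $\alpha\notin Im(\tau)$.

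For the vanishing I would aim to choose $X\in[\lambda]^\lambda$ and $E_{n,\alpha}\subseteq E^0_{n,\alpha}$ so that at every $p\in[E_{n,\alpha}]^*$ the pair $(n,\alpha)$ is the only one with second coordinate in $X$ that is relevant at $p$; then any injective $\tau$ avoiding $\alpha$ with $Im(\tau)\subseteq X$ has no relevant coordinate at such $p$, i.e. $|T(1_{m,\tau(m)})(p)|\le 1/(4M)$ for all $m$. Granting this, I would argue by contradiction: if $T(1_\tau)$ were nonzero on $[E_{n,\alpha}]^*$, Lemma \ref{locallyconstant} would produce an infinite $B\subseteq E_{n,\alpha}$ and $s\neq 0$ with $T(1_\tau)|[B]^*\equiv s$, and the target is to manufacture arbitrarily many functions $\tau_1,\tau_2,\dots$ whose characteristic functions are pairwise disjoint (equivalently $\tau_i(m)\neq\tau_j(m)$ for all $m$) and whose images $T(1_{\tau_j})$ are all bounded away from $0$ on a common infinite subset of $B$; the norm estimate $\sum_j|T(1_{\tau_j})(q)|\le M$ at a point $q$ of that set would then force the common lower bound to be $0$, a contradiction.

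The hard part is exactly this last manufacturing step, and it is the crux of the theorem: two graph-disjoint functions must differ at \emph{every} coordinate, yet $T(1_\tau)$ is a genuinely global object and is \emph{not} the (non-convergent) sum $\sum_m T(1_{m,\tau(m)})$, so one cannot keep its value near $s$ by altering coordinates one at a time. This is where the uncountability of $\lambda$ and the freedom in choosing $X$ must be spent: I expect to select $X$ (using the finiteness of the relevant set at each point, a pressing-down/counting argument on $\lambda$, and a passage to a suitable point of $\N^*$ via Lemma \ref{locallyconstant}) so that the family $\{T(1_{m,\beta}):\beta\in X\}$ is coherent enough --- its large-support sets stabilized off $X$ as above --- to allow the required disjoint copies with values bounded below to be produced. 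The separation property of the previous paragraph (that only $(n,\alpha)$ is $X$-relevant on $[E_{n,\alpha}]^*$) is itself delicate, since boundedly overlapping supports cannot in general be separated by shrinking to an infinite set, and securing it simultaneously for all $(n,\alpha)$ while keeping $|X|=\lambda$ is where I anticipate the main technical effort.
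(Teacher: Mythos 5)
There is a genuine gap: the heart of the theorem is never proved. Your reduction of the case $(n,\alpha)\in\sigma$ to the vanishing case via $1_\sigma=1_{n,\alpha}+1_{\sigma'}$ is exactly what the paper does at the end, and your final contradiction scheme (finitely many graph-disjoint functions whose images under $T$ are simultaneously bounded below on a common set violate $\Vert T\Vert$) is also the right one. But you explicitly defer ``the last manufacturing step'' and the ``separation property'' as the anticipated main technical effort, and the route you sketch for them is unlikely to close. Your pointwise relevance condition $|T(1_{m,\tau(m)})(p)|\le 1/(4M)$ for all $m$ gives no control on $T(1_\tau)(p)$, precisely because, as you yourself note, $T(1_\tau)$ is not the sum $\sum_m T(1_{m,\tau(m)})$; so even a perfect separation of the individual vectors $T(1_{m,\beta})$ would not yield the vanishing assertion. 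Likewise, starting from a single bad $\tau$ and trying to perturb it into many graph-disjoint copies with $T$-images bounded below on a common set requires changing $\tau$ at \emph{every} coordinate while preserving a global quantity, and you offer no mechanism for that.

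The paper's argument sidesteps both difficulties by reversing the quantifiers. For each fixed $(n,\alpha)$ it proves a local claim: there exist a \emph{countable} $X(n,\alpha)\subseteq\lambda$ and an infinite $E_{n,\alpha}\subseteq^* E'_{n,\alpha}$ such that every partial function $\sigma$ with $Im(\sigma)\cap X(n,\alpha)=\emptyset$ has $T(1_\sigma)|[E_{n,\alpha}]^*\equiv 0$. If this fails, then \emph{no} countable set works, and that failure itself manufactures the disjoint family: by transfinite recursion of length $\omega_1$ one picks $\sigma_\xi$ with $Im(\sigma_\xi)$ disjoint from the (countable) union of the images of all earlier $\sigma_\eta$ --- which forces the graphs to be pairwise disjoint with no perturbation argument at all --- together with a $\subseteq^*$-decreasing chain $F_\xi$ of infinite sets on which $T(1_{\sigma_\xi})\equiv r_\xi\neq 0$ (Lemma \ref{locallyconstant}). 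Uncountably many $r_\xi$ share a sign and a lower bound $\varepsilon$, and $m$ of them with $m\varepsilon>\Vert T\Vert$ give the contradiction on $[F_{\xi_m}]^*$. Finally, the countable sets $X(\alpha)=\bigcup_n X(n,\alpha)$ are converted into a single $X\in[\lambda]^\lambda$ with $X(\alpha)\cap X\subseteq\{\alpha\}$ by the Hajnal free-set lemma; injectivity of $\sigma$ then guarantees $Im(\sigma\setminus\{(n,\sigma(n))\})\cap X(\sigma(n))=\emptyset$, which is all the ``separation'' the proof needs. You should replace your relevance-based plan with this claim-plus-free-set-lemma structure; without it the proposal does not constitute a proof.
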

\begin{proof}
For each $n \in \N$ and each $\alpha \in \lambda$, by Lemma \ref{locallyconstant} there is $r_{n,\alpha}\in \R$ and an infinite set $E_{n,\alpha}'\subseteq \N$ such that 
$$|r_{n, \alpha}| \geq \frac{\Vert T(1_{n,\alpha})\Vert}{2}$$
and $T(1_{n,\alpha})|[E_{n,\alpha}']^* \equiv r_{n,\alpha}$.
\vskip 6pt
\noindent{\bf Claim:} \textit{For every $(n,\alpha) \in \N \times \lambda$, there is a countable set $X(n,\alpha)\subseteq \lambda$ and an infinite set $E_{n,\alpha}\subseteq^* E_{n,\alpha}'$ such that whenever $\sigma \in \lambda^A$
for some nonempty $A\subseteq \N$ is such that $Im(\sigma)\cap X(n, \alpha) = \emptyset$, then we have}
$$T(1_\sigma)|[E_{n,\alpha}]^*\equiv 0.$$
\vskip 6pt
\noindent \textit{Proof of the claim:}
If the claim does not hold, let $(n,\alpha)\in\N\times \lambda$ 
for which the claim fails. We will carry out certain transfinite inductive
construction of length $\omega_1$ which will lead to a contradiction. We will construct for each $\xi < \omega_1$ an infinite set $F_\xi\subseteq \N$, $r_\xi\in\R \setminus \{0\}$ and $\sigma_\xi\in \lambda^{A_\xi}$ for some nonempty $A_\xi\subseteq \N$ such that 
\begin{enumerate}
\item $F_\eta\subseteq^* F_\xi\subseteq^* E_{n,\alpha}'$ for all $\xi<\eta<\omega_1$,
\item $\sigma_\xi \cap \sigma_\eta = \emptyset$ for all $\xi<\eta<\omega_1$,
\item $T(1_{\sigma_\xi})|[F_{\xi}]^*\equiv r_\xi$ for all $\xi < \omega_1$.
\end{enumerate}

Given $\xi < \omega_1$, suppose we have already constructed infinite sets $(F_\eta)_{\eta < \xi} \subseteq \wp(\N)$, $(r_\eta)_{\eta < \xi} \subseteq \R \setminus \{0\}$ and $\sigma_\eta\in \lambda^{A_\eta}$
for some nonempty $A_\eta\subseteq\N$ and all $\eta<\xi$ as above. Let $F_\xi' \subseteq \N$ be an infinite set such that $F_\xi' \subseteq^* F_\eta$ for every $\eta < \xi$. Since $\Lambda = \bigcup\{Im(\sigma_\eta): \eta <\xi\}$ is a countable subset of $\lambda$, by our 
hypothesis there is $\sigma_\xi \in \lambda^{A_\xi}$ 
for some nonempty $A_\xi\subseteq\N$ such that $Im(\sigma_\xi) \cap  \Lambda = \emptyset$ and
$$T(1_{\sigma_\xi})|[F_\xi']^* \not\equiv 0$$
and using Lemma \ref{locallyconstant} find $F_\xi \subseteq F_\xi'$ infinite and $r_\xi \in \R \setminus \{0\}$ such that
$$T(1_{\sigma_\xi})|[F_\xi]^* \equiv r_\xi.$$
This concludes the inductive construction of objects satisfying (1), (2) and (3).

We can now find some $\varepsilon >0$ for which $R_\varepsilon = \{\xi < \omega_1: |r_\xi| \geq \varepsilon\}$ is infinite (uncountable, actually) and splitting $R_\varepsilon$ into two sets, we may assume without loss of generality that either $r_\xi \geq \varepsilon$ for every $\xi \in R_\varepsilon$ or $-r_\xi \geq \varepsilon$ for every $\xi \in R_\varepsilon$.

Fix $m \in \N$ such that $m \cdot \varepsilon > \Vert T \Vert$. Choose $\xi_1<... <\xi_m$ in $R_\varepsilon$ and notice that $|\sum_{i\leq m}r_{\xi_i}| \geq m \cdot \varepsilon >||T||$.

Since the $\sigma_{\xi_i}$'s are pairwise disjoint, we get that 
$$||\sum_{i\leq m} 1_{\sigma_{\xi_i}}||=1$$ 
but
$$||T(\sum_{i\leq m} 1_{\sigma_{\xi_i}})||
\geq ||T(\sum_{i\leq m} 1_{\sigma_{\xi_i}})|[F_{\xi_m}]^*||
\geq |\sum_{i\leq m}r_{\xi_i}|>||T||,$$
which is a contradiction and completes the proof of the claim.

\vskip 6pt

For each $\alpha \in \lambda$, let $X(\alpha) = \bigcup_{n \in \N} X(n,\alpha)$ and notice that $X(\alpha)$ is a countable subset of $\lambda$ such that for every $n \in \N$, there is an infinite set $E_{n,\alpha}\subseteq E_{n,\alpha}'$ such that whenever $\sigma \in \lambda^\N$ and $Im(\sigma) \cap X(\alpha) = \emptyset$, then we have
\begin{equation}\label{eq1}
T(1_\sigma)|[E_{n,\alpha}]^*\equiv 0.
\end{equation}

Now apply the Hajnal free-set lemma (Lemma 19.1, \cite{hajnalhamburger}) to obtain $X \subseteq \lambda$
of cardinality $\lambda$ such that $X(\alpha)\cap X\subseteq \{\alpha\}$ for each $\alpha\in X$. This implies that for distinct $\alpha, \beta \in X$, $\alpha \notin X(\beta)$.

Given $\sigma \in \lambda^\N$ which is injective and such that $Im(\sigma) \subseteq X$, 
notice that for distinct $n, n'\in \N$, $\sigma(n) \notin X(\sigma(n'))$,  which guarantees that 
$Im(\sigma \setminus\{(n, \sigma(n))\}) \cap X(\sigma(n)) = \emptyset$. 

Assume $\sigma\in\lambda^\N$ is
injective and  for each $(n, \alpha) \in \omega \times X$ let us consider the two cases. If $(n, \alpha) \in \sigma$,
then
$$T(1_\sigma)|[E_{n,\alpha}]^* = T(1_{n,\alpha})|[E_{n,\alpha}]^* + T(1_{\sigma \setminus \{(n,\alpha)\}})|[E_{n,\alpha}]^* \equiv r_{n,\alpha},$$
where the last equality follows from (\ref{eq1}) and the choice of $E_{n, \alpha}$.

If $\alpha \notin Im(\sigma)$, it follows from (\ref{eq1}) that
$$T(1_\sigma)|[E_{n,\alpha}]^* \equiv 0.$$
\end{proof}

Although the above theorem is sufficient for our applications, let us note that it has the following more
elegant version.

\begin{corollary}\label{embedding}
Assume $\lambda$ is an uncountable cardinal and $T:\ell_\infty(c_0(\lambda))\rightarrow \ell_\infty/c_0$ is an isomorphic embedding. Then there is an isomorphic embedding $T':\ell_\infty(c_0(\lambda))\rightarrow \ell_\infty/c_0$ and for each $(n, \alpha) \in \N \times \lambda$ there is an infinite set $E_{n, \alpha}\subseteq \N$ and $r_{n, \alpha}\in\R$
such that 
$$|r_{n, \alpha}| \geq \frac{\Vert T'(1_{n,\alpha})\Vert}{2}$$
and for all $(n,\alpha)\in \N \times \lambda$, if $\sigma \in \lambda^\N$, then
$$T'(1_\sigma)|[E_{n,\alpha}]^* \equiv \left\{\begin{array}{ll}
r_{n,\alpha} & \text{ if }(n, \alpha) \in \sigma\\
0 & \text{ otherwise}.\end{array}\right.$$
\end{corollary}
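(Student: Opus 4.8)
The plan is to deduce Corollary \ref{embedding} from Theorem \ref{bigrosenthal} by precomposing the given embedding $T$ with a carefully chosen isometric ``reindexing'' embedding $S\colon \ell_\infty(c_0(\lambda))\to \ell_\infty(c_0(\lambda))$ and setting $T'=T\circ S$. The role of $S$ is to transport the conclusion of Theorem \ref{bigrosenthal}, which only concerns the full-cardinality set $X\subseteq\lambda$ and only \emph{injective} $\sigma$ with range inside $X$, into a statement valid for \emph{all} indices $(n,\alpha)\in\N\times\lambda$ and \emph{all} $\sigma\in\lambda^\N$.

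To build $S$, I would use that $|X|=\lambda=|\N\times\lambda|$ and fix a bijection $\phi\colon \N\times\lambda\to X$. Writing $X_n=\phi[\{n\}\times\lambda]$, the sets $(X_n)_{n\in\N}$ partition $X$ and each row map $\phi_n\colon\lambda\to X_n$, $\phi_n(\alpha)=\phi(n,\alpha)$, is a bijection. Viewing an element of $\ell_\infty(c_0(\lambda))$ as a function on $\N\times\lambda$ each of whose rows lies in $c_0(\lambda)$, I let $S$ relabel, within each row $n$ separately, the coordinate $\alpha$ to $\phi_n(\alpha)$ and put $0$ off $X_n$. This is an isometric embedding because it acts as a coordinate bijection row by row. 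Its crucial feature is that $S(1_\sigma)=1_{\sigma'}$, where $\sigma'(n)=\phi(n,\sigma(n))$, and likewise $S(1_{n,\alpha})=1_{n,\phi(n,\alpha)}$. Since the first coordinate already separates the arguments of $\phi$, the function $\sigma'$ is automatically injective with $Im(\sigma')\subseteq X$, \emph{regardless of whether $\sigma$ is injective}; moreover $\sigma'(n)\in X_n$, so $Im(\sigma')\cap X_n=\{\phi(n,\sigma(n))\}$.

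With $S$ in hand, I apply Theorem \ref{bigrosenthal} to $T$ to obtain data $E^T_{n,\beta}, r^T_{n,\beta}$ for $(n,\beta)\in\N\times X$, and set $E_{n,\alpha}=E^T_{n,\phi(n,\alpha)}$ and $r_{n,\alpha}=r^T_{n,\phi(n,\alpha)}$ for every $(n,\alpha)\in\N\times\lambda$. The norm estimate is immediate, since $T'(1_{n,\alpha})=T(1_{n,\phi(n,\alpha)})$ gives $\Vert T'(1_{n,\alpha})\Vert=\Vert T(1_{n,\phi(n,\alpha)})\Vert$. For the dichotomy, fix $\sigma\in\lambda^\N$ and $(n,\alpha)\in\N\times\lambda$ and note $T'(1_\sigma)=T(1_{\sigma'})$. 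If $(n,\alpha)\in\sigma$ then $\sigma'(n)=\phi(n,\alpha)$, so $(n,\phi(n,\alpha))\in\sigma'$ and Theorem \ref{bigrosenthal} yields $T'(1_\sigma)|[E_{n,\alpha}]^*\equiv r_{n,\alpha}$. If $(n,\alpha)\notin\sigma$ then $\alpha\neq\sigma(n)$, whence, using $\phi(n,\alpha)\in X_n$ together with $Im(\sigma')\cap X_n=\{\phi(n,\sigma(n))\}$, we get $\phi(n,\alpha)\notin Im(\sigma')$, and the second clause of Theorem \ref{bigrosenthal} gives $T'(1_\sigma)|[E_{n,\alpha}]^*\equiv 0$.

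The step I expect to carry the whole argument is the choice of $\phi$ on $\N\times\lambda$ rather than a naive bijection $\lambda\to X$: spreading the relabelling across the $\N$-coordinate is exactly what forces $\sigma'$ to be injective even for non-injective $\sigma$, and, just as importantly, pins each coordinate $\phi(n,\alpha)$ to its row, so that $\phi(n,\alpha)\in Im(\sigma')$ fails whenever $\alpha\neq\sigma(n)$. This is what makes the clean ``$r_{n,\alpha}$ if $(n,\alpha)\in\sigma$, and $0$ otherwise'' dichotomy hold for \emph{all} pairs, closing the gap left in Theorem \ref{bigrosenthal} between the cases $(n,\alpha)\in\sigma$ and $\alpha\notin Im(\sigma)$. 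The remaining verifications, that $S$ is a well-defined isometric embedding into $\ell_\infty(c_0(\lambda))$ and that $T'$ is an isomorphic embedding as a composition, are routine.
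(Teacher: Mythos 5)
Your proposal is correct and follows essentially the same route as the paper: the paper also partitions $X$ into countably many pieces $X_n=\{\gamma^n_\beta:\beta<\lambda\}$ of cardinality $\lambda$ (your bijection $\phi$ with $\gamma^n_\alpha=\phi(n,\alpha)$), defines the same row-by-row relabelling isometry $S$, sets $T'=T\circ S$, and pulls back the data via $E'_{n,\alpha}=E_{n,\gamma^n_\alpha}$, $r'_{n,\alpha}=r_{n,\gamma^n_\alpha}$. Your observation that confining each relabelled coordinate to its own row is what upgrades the theorem's partial dichotomy to the full ``$(n,\alpha)\in\sigma$ versus otherwise'' dichotomy is exactly the point of the paper's construction.
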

\begin{proof}
Let $X\subseteq \lambda$ of cardinality $\lambda$ and an infinite set $E_{n, \alpha}\subseteq \N$ and $r_{n, \alpha}\in\R$ for each $(n, \alpha) \in \N \times X$ be as in Theorem \ref{bigrosenthal}.

Let $(X_n)_{n \in \N}$ be a partition of $X$ into countably many sets of cardinality $\lambda$ and enumerate each $X_n$ as $X_n = \{\gamma^n_\beta: \beta < \lambda\}$.

Define $S: \ell_\infty(c_0(\lambda)) \rightarrow \ell_\infty (c_0(\lambda))$ by 
$$S(f)(n)(\beta) = \left\{\begin{array}{ll}
f(n)(\alpha) & \text{ if }\beta = \gamma^n_\alpha\\
0 & \text{ otherwise}.\end{array}\right.$$
Notice that $S$ has the following properties:
\begin{itemize}
\item $S$ is an isometric embedding.
\item If $\sigma \in \lambda^\N$, 
then $S(1_\sigma) = 1_{s(\sigma)}$ where $s(\sigma) =
 \{(n, \gamma^n_\alpha): (n, \alpha) \in \sigma\}$, 
so that $s(\sigma) \in \lambda^\N$ is injective, 
 $Im(s(\sigma)) \subseteq X$ and 
$\gamma_{n,\alpha}\in Im(s(\sigma))$ if and only if $(n,\alpha) \in \sigma$.
\end{itemize}


Let $T' = T \circ S$, $E_{n, \alpha}' = E_{n,\gamma^n_\alpha}$ and $r_{n,\alpha}' = r_{n, \gamma^n_\alpha}$. 
Given $\alpha < \lambda$ and $n\in \N$, 
$$|r_{n, \alpha}'| = |r_{n,\gamma^n_\alpha}| \geq \frac{\Vert T(1_{n,\gamma^n_\alpha})\Vert}{2} = \frac{\Vert T(S(1_{n,\alpha}))\Vert}{2}= \frac{\Vert T'(1_{n,\alpha})\Vert}{2}.$$
Also, given any $\sigma \in \lambda^\N$ and $(n, \alpha) \in \N \times \lambda$, we have:
\begin{itemize}
\item if $(n,\alpha) \in \sigma$, then
$$T'(1_\sigma)|[E_{n,\alpha}']^* 
= T(S(1_\sigma))|[E_{n,\gamma^n_\alpha}]^* = r_{n,\gamma^n_\alpha} = r_{n,\alpha}',$$
\item if $(n,\alpha) \notin \sigma$, then $\gamma^n_\alpha \notin Im(s(\sigma))$ so that
$$T'(1_\sigma)|[E_{n,\alpha}']^* = T(1_{s(\sigma)})|[E_{n,\gamma^n_\alpha}]^* = 0.$$
\end{itemize}
This concludes the proof that $T'$ is the isomorphic embedding with the required properties.
\end{proof}

\section{Forcing argument}

The next lemma still holds if we replace $\omega_2$ by any regular cardinal $\lambda$ with $\omega_2 \leq \lambda \leq \kappa$. To simplify the notation we state it in this weaker form, which is sufficient for our purposes.

\begin{lemma}\label{maindisjointlemma} 
Let $V$ be a model of CH, $\kappa \geq \omega_2$ and $\PP=Fn_{<\omega}(\kappa, 2)$. In $V^{\PP}$, 
if $(E_{n,\alpha}: (n,\alpha) \in \N \times \omega_2)$ are infinite subsets of $\N$
and for each $\sigma \in \omega_2^\N$, $B_{\sigma}$ is a subset of $\N$ such that 
$$\forall (n,\alpha) \in \sigma \quad E_{n,\alpha}\subseteq^* B_{\sigma},$$
then there is a pairwise disjoint subset $\Sigma\subset \omega_2^\N$ of cardinality $\omega_2$
such that $\{B_\sigma: \sigma\in \Sigma\}$ has the finite intersection property, that is, for every $\sigma_1, \dots, \sigma_m \in \Sigma$, $B_{\sigma_1} \cap \dots \cap B_{\sigma_m}$ is infinite.
\end{lemma}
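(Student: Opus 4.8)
The plan is to work in the Cohen extension $V^{\mathbb{P}}$ where $\mathbb{P}=Fn_{<\omega}(\kappa,2)$, exploit the fact that each relevant object is determined by a countable amount of information (a ``name with countable support''), and build $\Sigma$ by a transfinite construction of length $\omega_2$ that keeps the finite intersection property intact. The key structural feature of the Cohen forcing is its countable-chain-condition together with the product/finite-support nature: any single real (here each $B_\sigma$, and the function $\sigma$ itself) added in the extension is added by a countable sub-forcing $Fn_{<\omega}(a,2)$ for some countable $a\subseteq\kappa$. I would first fix, in $V$, a $\mathbb{P}$-name for the whole system $(E_{n,\alpha})$, $(B_\sigma)$ and, for each $\sigma\in\omega_2^\N$ that will enter $\Sigma$, isolate a countable set $a_\sigma\subseteq\kappa$ of coordinates that \emph{supports} the name of $B_\sigma$ in a suitable sense.

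The heart of the argument is a \textbf{disjointness-and-genericity} step. Having built $\{\sigma_\eta:\eta<\xi\}$ pairwise disjoint with $\{B_{\sigma_\eta}\}$ having the finite intersection property, I must produce a new $\sigma_\xi$, disjoint from all previous $\sigma_\eta$, whose $B_{\sigma_\xi}$ still meets every finite intersection of earlier $B_{\sigma_\eta}$'s infinitely. Here I would use a counting/$\Delta$-system argument on the supports $a_{\sigma_\eta}$: since $\omega_2$ is regular and each support is countable, by the $\Delta$-system lemma (available since $V\models$ CH controls the relevant cardinal arithmetic) I can thin out to $\omega_2$-many $\sigma_\eta$ whose supports form a $\Delta$-system with root $R$. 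Picking $\sigma_\xi$ whose support avoids the already-used parts \emph{outside} $R$ makes $\sigma_\xi$ genuinely ``mutually generic'' over the finitely many previously chosen coordinates beyond the root, and the genericity of Cohen reals then forces $B_{\sigma_\xi}$ to meet each already-constructed finite intersection infinitely often: a finite intersection of $B$'s is (essentially) an infinite set in an intermediate model, and a mutually generic Cohen-controlled $B_{\sigma_\xi}$ cannot be almost disjoint from such a set, because that would be a genericity-violating density requirement.

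Concretely, to drive the infinite-intersection conclusion I would translate ``$B_{\sigma_1}\cap\dots\cap B_{\sigma_m}$ is infinite'' into a density statement in $\mathbb{P}$: for every condition $p$ and every $k$, the set of conditions deciding some $n>k$ into all of $B_{\sigma_1},\dots,B_{\sigma_m}$ is dense. Since each $B_{\sigma_i}\supseteq^* E_{n_i,\alpha_i}$ for the pairs $(n_i,\alpha_i)\in\sigma_i$, and the $E$'s are fixed infinite sets, I can use the $E$'s as a ``common anchor'': if the chosen $\sigma_\xi$ can be arranged so that its graph shares \emph{no} constraint with the earlier graphs on the non-root coordinates, genericity guarantees the intersection of the corresponding $B$'s contains infinitely many of the anchoring $E$-elements. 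This is essentially the Dow-style argument referenced after the statement, recast for the combinatorics of graphs of functions $\sigma$.

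\textbf{The main obstacle} I anticipate is the interaction between \emph{disjointness} of the $\sigma_\xi$ (a requirement on their graphs, i.e. in the ground combinatorics) and the \emph{finite intersection property} of the $B_{\sigma_\xi}$ (a genericity requirement on the reals the $\sigma$'s name). These pull in opposite directions: to keep the $B$'s overlapping I want the $\sigma$'s to share structure, but disjointness forbids them from sharing any pair $(n,\alpha)$. Reconciling them is exactly where the gap $\omega_2\le\kappa$ and the countable supports are essential — I would need to verify carefully that I can choose the $\sigma_\xi$ to be pairwise disjoint as functions while their defining coordinates in $\kappa$ still overlap on a fixed root, so that disjointness lives at the level of $\omega_2$-indices (the $\alpha$'s) while the mutual genericity lives at the level of $\kappa$-coordinates. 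Making those two index sets cooperate, via the Hajnal free-set lemma or a direct fusion, will be the technically delicate part of the forcing construction.
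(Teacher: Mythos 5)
There is a genuine gap at the heart of your argument. Your mechanism for getting the finite intersection property is ``mutual genericity'': you claim that if $\sigma_\xi$ is chosen with support disjoint (off a root) from the supports of the earlier data, then $B_{\sigma_\xi}$ ``cannot be almost disjoint from'' the earlier finite intersections because that would violate a density requirement. This is not true. The sets $B_\sigma$ (and the sets $E_{n,\alpha}$ they almost contain) are \emph{arbitrary} sets in the extension subject only to the hypothesis $E_{n,\alpha}\subseteq^* B_\sigma$ for $(n,\alpha)\in\sigma$; they are not Cohen reals and satisfy no genericity requirement. Two infinite subsets of $\N$ with disjoint countable supports can perfectly well be disjoint (one name can describe ``the even numbers shuffled by the generic on coordinates $a_1$'', the other ``the odd numbers shuffled by the generic on coordinates $a_2$''). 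So nothing in mutual genericity alone forces $B_{\sigma_\xi}$ to meet the earlier intersections, and your ``common anchor'' remark does not repair this, since the $E_{n,\alpha}$'s themselves need not intersect.

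The missing idea is an \emph{isomorphism-of-names} (automorphism) argument. Using CH one thins each column $\{\dot E_{n,\alpha}:\alpha\in A_n\}$ not merely to a $\Delta$-system of supports but to a family of \emph{pairwise isomorphic nice names}: for $\alpha<\beta$ in $A_n$ there is a bijection $\pi_{n,\alpha,\beta}$ of supports fixing the root with $\pi_{n,\alpha,\beta}(\dot E_{n,\alpha})=\dot E_{n,\beta}$. Then, toward a contradiction with a condition $p$ forcing $\dot B_{\sigma_{\xi_1}}\cap\dots\cap\dot B_{\sigma_{\xi_m}}\subseteq l$, one picks a coordinate $n$ avoided by $\mathrm{dom}(p)$, builds an $n$-symmetric extension deciding the values $h_{\xi_i}(n)$ that control the $\subseteq^*$ exceptions (a step your sketch omits entirely), forces some large $n_0$ into $\dot E_{n,\sigma_{\xi_1}(n)}$, and then \emph{copies this condition through the automorphisms} $\pi_{n,\sigma_{\xi_1}(n),\sigma_{\xi_j}(n)}$ to force the same $n_0$ into every $\dot E_{n,\sigma_{\xi_j}(n)}$ simultaneously, hence into every $\dot B_{\sigma_{\xi_j}}$. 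It is this symmetrization, not genericity of the $B$'s, that produces common elements. Your setup (countable supports, $\Delta$-systems, Hajnal's free-set lemma, disjointness of graphs living at the $\omega_2$-level while supports overlap only on roots) is all consonant with the paper's proof, but without the isomorphic-names step the finite intersection property does not follow.
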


\begin{proof} 
In $V$, for $(n, \alpha) \in \N \times \omega_2$ let $\dot{E}_{n,\alpha}$ be a nice-name for an infinite subset of $\N$.

For each $n\in\N$ and $\alpha\in \omega_2$, let $S_{n,\alpha} = supp (\dot{E}_{n, \alpha})$, which are countable subsets of $\kappa$ since $\PP$ is ccc.

By CH and the $\Delta$-system lemma, we may find pairwise disjoint $(A_n)_{n \in \omega} \subseteq [\omega_2]^{\omega_2}$ such that
\begin{itemize}
\item for each $n \in \N$, $(S_{n, \alpha})_{\alpha \in A_n}$ is a $\Delta$-system with root $\Delta_n$.
\end{itemize}

Let $\Delta = \bigcup_{n \in \N} \Delta_n$ and since this is a countable set, by a further thinning out of each $A_n$, we may assume that 
\begin{itemize}
\item for every $\alpha \in A_n$, $\Delta \cap (S_{n,\alpha} \setminus \Delta_n) = \emptyset$, i.e. $\Delta \cap S_{n,\alpha} = \Delta_n$. 
\end{itemize}

We may also assume that for each $\alpha < \beta$ in $A_n$ there is a bijection $\pi_{n, \alpha,\beta}: S_{n,\alpha} \rightarrow S_{n, \beta}$ such that
\begin{itemize}
\item $\pi_{n, \alpha,\beta}|_{\Delta_n} = id$,
\item $\pi_{n, \alpha,\beta} (\dot{E}_{n, \alpha}) = \dot{E}_{n, \beta}$ (here $\pi_{n, \alpha,\beta}$ denotes the automorphism of $\PP$ obtained by lifting $\pi_{n, \alpha,\beta}$).
\end{itemize}

Inductively choose, for $\xi < \omega_2$, functions $\sigma_\xi \in \omega_2^\N$ such that:
\begin{itemize}
\item $\sigma_\xi(n) \in A_n$ for each $n\in\N$,
\item for all $\xi < \eta < \omega_2$ and all distinct $n,m \in \N$, 
$$(S_{n, \sigma_\xi(n)} \setminus \Delta_n) \cap (S_{m, \sigma_\eta(m)}\setminus \Delta_m) = \emptyset,$$
\item for all $\xi < \eta < \omega_2$, $\sup_{n \in \omega} \sigma_\xi(n) < \sigma_\eta(0)$, 
so that $\sigma_\xi \cap \sigma_\eta = \emptyset$. 
\end{itemize}

For each $\xi <\omega_2$, let $\dot{B}_{\xi}$ be a name for a subset of $\N$ 
as in the hypothesis of the lemma, that is, such that 
$$\PP \Vdash \forall (n,\alpha) \in \check{\sigma}_\xi \quad \dot{E}_{n,\alpha}\subseteq^* \dot{B}_{\sigma_\xi}$$
and let $\dot{h}_{\xi}$ be a nice-name such that
$$\PP \Vdash \dot{h}_{\xi}: \N \rightarrow \N \text{ is such that } \forall (n,\alpha) \in 
\check{\sigma}_\xi \quad \dot{E}_{n, \alpha} \setminus \dot{h}_{\xi}(n) \subseteq \dot{B}_{\sigma_\xi}.$$
Let $R_\xi = supp (\dot{h}_{\xi})$ and $S_\xi = \bigcup_{n \in \mathbb{N}} (S_{n, \sigma_\xi(n)} \setminus \Delta_n)$ and notice that the $S_\xi$'s are pairwise disjoint countable subsets of $\kappa$. By the $\Delta$-system lemma 
and further thinning out there is $A\subseteq\omega_2$ of cardinality $\omega_2$ 
such that 
\begin{itemize}
\item $(R_{\xi})_{\xi\in A}$ is a $\Delta$-system with root $R$,
\item for all $\xi \in A$, $\Delta \cap (R_{\xi} \setminus R) = \emptyset$,
\item for all distinct $\xi, \eta \in A$, $S_\xi \cap R_{\eta} = \emptyset$,
\end{itemize}
where the last property can be achieved by Hajnal's free set lemma (Lemma 19.1, \cite{hajnalhamburger}).

Fix $m \in \N$ and $\xi_1<... <\xi_m$ from $A$ and let us prove that 
$$\mathbb{P} \Vdash \dot{B}_{\sigma_{\xi_1}} \cap \dots \cap \dot{B}_{\sigma_{\xi_m}} \text{ is infinite,}$$
which would give that $\{B_{\sigma_\xi}:\xi\in A\}$ has the finite intersection property.
Otherwise, there are $p \in \mathbb{P}$ and $l \in \mathbb{N}$ such that 
$$p \Vdash \dot{B}_{\sigma_{\xi_1}} \cap \dots \cap \dot{B}_{\sigma_{\xi_m}} \subseteq \check l.$$

Given $n \in \N$ we say that $q\in \mathbb{P}$ is $n$-symmetric if 
$$\pi_{n, \sigma_{\xi_i}(n), \sigma_{\xi_j}(n)}(q|_{S_{n, \sigma_{\xi_i}(n)}}) = q|_{S_{n, \sigma_{\xi_j}(n)}}
 \text{ for all }1 \leq i <j\leq m.$$

Fix $n\in \N$ such that $dom(p) \cap S_{n,\sigma_{\xi_i}(n)} = \emptyset$  for all $1 \leq i \leq m$ and 
notice that $p$ is $n$-symmetric. Let us find $q \leq p$ which is $n$-symmetric and $k_1, \dots, k_m \in \N$ such that $q \Vdash \dot{h}_{\xi_i}(\check n) = \check k_i$. To do so, we will construct conditions
$p_m \leq p_{m-1} \leq \dots \leq p_1 \leq p$ and $k_1, \dots, k_m \in \N$ such that each $p_i$ is $n$-symmetric and $p_i \Vdash \dot{h}_{\xi_i}(\check n) = \check{k}_i$. Let $p_0 = p$.

Given $1 \leq i \leq m$, let $q_i \leq p_{i-1}$ and $k_i \in \N$ be such that $q_i \Vdash \dot{h}_{\xi_i}(\check n) = \check{k}_i$ and $dom(q_i)\setminus dom(p_{i-1})\subseteq R_{\xi_i}$. Let 
$$p_i = q_i \cup \bigcup_{1 \leq j \leq m} \pi_{n, \sigma_{\xi_i}(n),\sigma_{\xi_j}(n)}(q_i|_{S_{n, \sigma_{\xi_i}(n)}}).$$
Notice that $p_i \in \mathbb{P}$, $p_i$ is $n$-symmetric and, since $p_i \leq q_i$, $p_i \Vdash \dot{h}_{\xi_i}(\check n) = \check{k}_i$, as we wanted.

Now $p_m$ is an $n$-symmetric condition such that 
$$p_m \Vdash \forall 1 \leq i \leq m \quad \dot{h}_{\xi_i}(\check n) = \check{k}_i.$$

Since $\mathbb{P}$ forces that $\dot{E}_{n, \sigma_{\xi_1}(n)}$ is infinite, let $r_1 \leq p_m$ and $n_0 > \max\{k_1, \dots, k_m, l\}$ be such that $r_1 \Vdash n_0 \in \dot{E}_{n, \sigma_{\xi_1}(n)}$ and $dom(r_1) \setminus dom(p_m) \subseteq S_{n, \sigma_{\xi_1}(n)}$. 
Let
$$r = r_1 \cup \bigcup_{2 \leq j \leq m} \pi_{n, \sigma_{\xi_1}(n),\sigma_{\xi_j}(n)}(r_1)$$
and notice that $r \in \mathbb{P}$, $r \leq p$ and
$$r\forces {\check n}_0 \in (\dot{E}_{n,\xi_1}\setminus \dot{h}_{\xi_1}(n))\cap \dots \cap (\dot{E}_{n,\xi_m}\setminus \dot{h}_{\xi_m}(n)),$$
so that 
$$r\forces {\check n}_0 \in \dot{B}_{\sigma_{\xi_1}}\cap \dots \cap \dot{B}_{\sigma_{\xi_m}},$$
which contradicts our assumption since $n_0>l$. This concludes the proof.
\end{proof}

\begin{theorem}\label{maintheorem}
Let $V$ be a model of CH, $\kappa \geq \omega_2$ and $\PP=Fn_{<\omega}(\kappa, 2)$. In $V^{\PP}$
there is no isomorphic embedding $T:\ell_\infty(c_0(\omega_2))\rightarrow \ell_\infty/c_0$.
\end{theorem}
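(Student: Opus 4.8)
The plan is to derive a contradiction from the existence of such an embedding by feeding the structure extracted in Section 2 into the forcing Lemma \ref{maindisjointlemma}. Suppose $T:\ell_\infty(c_0(\omega_2))\to\ell_\infty/c_0$ is an isomorphic embedding in $V^{\PP}$. First I would pass to the embedding $T'$ furnished by Corollary \ref{embedding} (with $\lambda=\omega_2$), which produces infinite sets $E_{n,\alpha}\subseteq\N$ and reals $r_{n,\alpha}$ for every $(n,\alpha)\in\N\times\omega_2$ with $|r_{n,\alpha}|\ge\Vert T'(1_{n,\alpha})\Vert/2$ and, crucially, the constant-value identity $T'(1_\sigma)|[E_{n,\alpha}]^*\equiv r_{n,\alpha}$ whenever $(n,\alpha)\in\sigma$, valid for \emph{all} $\sigma\in\omega_2^\N$. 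Using Corollary \ref{embedding} rather than Theorem \ref{bigrosenthal} is convenient precisely because it makes this identity available for arbitrary $\sigma$, so I need not worry about injectivity of the functions later produced by the forcing lemma. Fixing $c>0$ with $\Vert T'(u)\Vert\ge c\Vert u\Vert$ and recalling that $\Vert 1_{n,\alpha}\Vert=1$, I get the uniform lower bound $|r_{n,\alpha}|\ge c/2$.

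Next I would manufacture the sets $B_\sigma$ required by Lemma \ref{maindisjointlemma}. Fix $\delta=c/4$ and, for each $\sigma\in\omega_2^\N$, choose a representative $x_\sigma\in\ell_\infty$ of $T'(1_\sigma)$ and set $B_\sigma=\{m\in\N:|x_\sigma(m)|>\delta\}$. For $(n,\alpha)\in\sigma$ the identity above says that $(x_\sigma(m))_{m\in E_{n,\alpha}}$ converges to $r_{n,\alpha}$, and since $|r_{n,\alpha}|\ge c/2>\delta$ this forces $|x_\sigma(m)|>\delta$ for all but finitely many $m\in E_{n,\alpha}$; that is, $E_{n,\alpha}\subseteq^* B_\sigma$. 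Thus the hypothesis of Lemma \ref{maindisjointlemma} is met, and it yields a pairwise disjoint family $\Sigma\subset\omega_2^\N$ of cardinality $\omega_2$ such that $\{B_\sigma:\sigma\in\Sigma\}$ has the finite intersection property.

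Finally I would turn the finite intersection property into a violation of the boundedness of $T'$. Fix an integer $m>\Vert T'\Vert/\delta$ and any $\sigma_1,\dots,\sigma_m\in\Sigma$. Because the graphs $\sigma_i$ are pairwise disjoint, for each fixed $k\in\N$ the ordinals $\sigma_1(k),\dots,\sigma_m(k)$ are distinct, so $\Vert\sum_{i\le m}\varepsilon_i 1_{\sigma_i}\Vert=1$ for every choice of signs $\varepsilon_i\in\{-1,1\}$. The set $D=B_{\sigma_1}\cap\dots\cap B_{\sigma_m}$ is infinite, and on $D$ every $|x_{\sigma_i}(j)|$ exceeds $\delta$. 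Recalling that the norm of a class in $\ell_\infty/c_0$ is the $\limsup$ of the absolute values of a representative, this would give $\Vert T'(\sum_i\varepsilon_i 1_{\sigma_i})\Vert\ge m\delta$ once the signs are arranged, contradicting $\Vert T'(\sum_i\varepsilon_i 1_{\sigma_i})\Vert\le\Vert T'\Vert<m\delta$.

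The main obstacle is exactly arranging those signs: the finite intersection property alone does not bound $\Vert\sum_i\varepsilon_i x_{\sigma_i}\Vert$ from below, since the signs of the $x_{\sigma_i}(j)$ may conspire to cancel. I would resolve this by a pigeonhole over sign patterns. The map $j\mapsto(\mathrm{sgn}\,x_{\sigma_i}(j))_{i\le m}$ sends the infinite set $D$ into the finite set $\{-1,1\}^m$, so some pattern $(\varepsilon_i)$ is attained for infinitely many $j\in D$, and for each such $j$ one has $\sum_{i\le m}\varepsilon_i x_{\sigma_i}(j)=\sum_{i\le m}|x_{\sigma_i}(j)|>m\delta$; hence $\limsup_j|\sum_i\varepsilon_i x_{\sigma_i}(j)|\ge m\delta$, which is the required lower bound. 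The whole point of the threshold construction of the $B_\sigma$ is to make this simple sign-pigeonhole argument applicable, and once it is in place the boundedness of $T'$ is contradicted, completing the proof.
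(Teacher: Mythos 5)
Your proof is correct and follows essentially the same route as the paper: pass to the embedding of Corollary \ref{embedding}, form the threshold sets $B_\sigma$ from representatives, apply Lemma \ref{maindisjointlemma}, and use a sign pigeonhole to contradict the boundedness of the operator. The only cosmetic difference is that you pigeonhole sign patterns over the infinite intersection and use signed sums of $m$ of the $1_\sigma$'s, whereas the paper takes $2m$ pairwise disjoint $\sigma$'s and selects $m$ of them whose images have equal sign at a single point of $[B]^*$; both variants are valid.
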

\begin{proof}
We work in $V^\PP$. Let $\varepsilon >0$ be such that $\frac{1}{\Vert T^{-1} \Vert} > \varepsilon$. 

By Corollary \ref{embedding} we may assume that for each $(n, \alpha) \in \N \times \omega_2$ there is an infinite set $E_{n, \alpha}\subseteq \N$ and $r_{n, \alpha}\in\R$
such that 
\begin{equation}\label{eq*}
|r_{n, \alpha}| \geq \frac{\Vert T(1_{n,\alpha})\Vert}{2} \geq \frac{1}{2 \cdot \Vert T^{-1}\Vert} > \frac{\varepsilon}{2}
\end{equation}
and for all $n\in \N$ and all $\alpha\in \omega_2$, if $\sigma \in \omega_2^\N$, then
\begin{equation}\label{eq**}
T(1_\sigma)|[E_{n,\alpha}]^* \equiv \left\{\begin{array}{l}
r_{n,\alpha}, \text{ if }(n, \alpha) \in \sigma\\
0, \text{ otherwise}.\end{array}\right.
\end{equation}


For each $\sigma \in \omega_2^\N$, fix any representative $x_{\sigma} 
\in C(\beta \N)$ of $T(1_\sigma)$ and let
$$B_{\sigma} = \{k \in \N: |x_{\sigma}(k)| > \frac{\varepsilon}{4}\}.$$

Then, we get that $B_{\sigma}$'s are as in the hypothesis of Lemma \ref{maindisjointlemma}. 

Given $m \in \N$ such that $\Vert T \Vert < m \cdot \frac{\varepsilon}{4}$, by Lemma \ref{maindisjointlemma} there are pairwise disjoint $\sigma_1, \dots, \sigma_{2m} \in \omega_2^\N$ such that 
$$B=B_{\sigma_1}\cap... \cap B_{\sigma_{2m}} \text{ is infinite.}$$

Given $u \in [B]^*$, let $1 \leq j_1 < \dots < j_m \leq 2m$
 be such that $T(1_{\sigma_{j_i}})(u)$ are either all positive or all negative. Then
$$|T(\sum_{i=1}^m 1_{\sigma_{j_i}})(u)| \geq m \cdot \frac{\varepsilon}{4} > \Vert T \Vert,$$
which contradicts the fact that
$$\Vert \sum_{i=1}^m 1_{\sigma_{j_i}} \Vert = 1$$
and concludes the proof.
\end{proof}

Note that apparently we did not use in the above proof the entire strength of Corollary \ref{embedding}, namely
we do not use the fact that $T(1_\sigma)|E_{n,\alpha}\equiv0$ when $(n,\alpha)\not\in \sigma$.
However this is used within the proof of Corollary \ref{embedding} to conclude that
$T(1_\sigma)|E_{n,\alpha}\equiv r_{n,\alpha}$ when $(n,\alpha)\in \sigma$.

\bibliographystyle{amsplain}

\end{document}